\documentclass[12pt,a4paper,reqno]{amsart}

\usepackage[margin=2cm]{geometry}
\usepackage{amssymb}
\usepackage{amsmath,amsthm}
\usepackage{amsfonts}
\usepackage{mathrsfs}

\usepackage{color}
\usepackage{hyperref}
\hypersetup{
	colorlinks = true,%
	citecolor = blue,
	filecolor=red,%
	linkcolor = [rgb]{0.65,0.0,0.0},%
	anchorcolor = red,
	pagecolor = red,
	urlcolor= [rgb]{0.65,0.0,0.0}
	linktocpage=true,
	pdfpagelabels=true,
	bookmarksnumbered=true,
}

\newcommand{\R}{{\mathbb R}}

\newcommand{\Pl}{(\mathcal{P}_{\lambda})}

\newcommand{\ds}{\displaystyle}

\newtheorem{theor}{Theorem}[section]

\newtheorem{lem}{Lemma}[section]

\newtheorem{question}{Question}[section]
\newtheorem{rem}{Remark}[section]

\title[Kirchhoff-type problems involving the critical Sobolev exponent]{On an open question of Ricceri concerning a Kirchhoff-type problem}
\author{Francesca Faraci }

\email{ffaraci@dmi.unict.it}

\address{Department of Mathematics and Computer Science, University of Catania,
	Catania, Italy}

\author{Csaba Farkas}

\email{farkas.csaba2008@gmail.com \& farkascs@ms.sapientia.ro}

\address{Department of Mathematics and Computer Science, Sapientia University,
	Tg. Mures, Romania \& Institute of Applied Mathematics, Obuda University,
1034 Budapest, Hungary}


\begin{document}
\maketitle	
	
\begin{abstract} In the present note we prove a multiplicity result for a Kirchhoff type  problem involving a critical term,   giving a partial positive answer to a problem raised by Ricceri.
	\end{abstract}
\section{Introduction}	
Nonlocal boundary value problems of the type
$$
\left\{
  \begin{array}{ll}
    - \left( a+b\ds\int_\Omega |\nabla u|^2 dx\right)\Delta u=
f(x,u), & \hbox{ in } \Omega \\ \\
    u=0, & \hbox{on } \partial \Omega
  \end{array}
\right.
$$
are related to the stationary version of the Kirchhoff equation
$$\frac{\partial^2 u}{\partial t^2}- \left( a+b\ds\int_\Omega |\nabla u|^2 dx\right)\Delta u=f_1(t,x,u),$$ first proposed by Kirchhoff to describe the transversal oscillations of a stretched string. Here $\Omega$ is a bounded domain of $\R^N$, $u$ denotes the displacement, $f_1$ is the
external force, $b$ is the initial tension and $a$ is related to the intrinsic properties
of the string.

 Note that, this type of nonlocal equations appears in other fields like biological systems, where $u$ describes a process depending on the average of itself, like population density (see for instance \cite{CL}).

The first attempt to find solutions for subcritical nonlinearities, by means of variational methods, is due to Ma and Rivera \cite{MR} and  Alves, Corr\^{e}a and Ma \cite{ACM} who combined minimization arguments with truncation techniques and a priori estimates. Using Yang index and critical group arguments or the theory of invariant sets of descent flows, Perera and Zhang (see \cite{PZ,ZP}) proved existence results for the above problem. Multiplicity theorems can be found for instance in \cite{CKW,MZ,R0}.

 The existence or multiplicity of solutions of the Kirchhoff type problem  with critical exponents in a bounded domain (or even in the whole space) has been studied by using different techniques as  variational methods, genus theory,  the Nehari manifold, the Ljusternik--Schnirelmann category theory (see for instance \cite{CF,Fan,F,FS}).
  It is worth mentioning that  Mountain Pass arguments combined with  the Lions' Concentration Compactness principle \cite{L} are still the most popular tools to deal with such problems in the presence of a critical term. Applications to the lower dimensional case ($N<4$) can be found in  \cite{ACF,LLG,N}, while for  higher dimensions  ($N\geq4$) we refer to \cite{H1,H2,N0,YM}. Notice that in order to employ the Concentration Compactness principle, $a$ and $b$ need to satisfy suitable constraints.

In order to state our main result we introduce the following notations: we endow the Sobolev space $H^1_0(\Omega)$ with the
classical norm $\|u\|=\left( \int_{\Omega }|\nabla u|^2 \ dx\right)^{\frac{1}{2}}$ and  denote by $\|u\|_{q}$ the Lebesgue norm in $L^{q}(\Omega)$ for $1\leq q \leq 2^\star$,  i.e. $\|u\|_{q}=\left(\int_{\Omega} |u|^{q} \ dx\right)^{\frac{1}{q}}$.
Let $S_N$ be the embedding constant of $H^1_0(\Omega)\hookrightarrow L^{2^\star}(\Omega)$, i.e.
\[\|u\|^2_{2^\star}\leq S_N^{-1} \|u\|^2 \qquad \mbox{for every } \ u\in H^1_0(\Omega). \]
Let us recall that (see Talenti \cite{Talenti} and Hebey \cite{H1} for an explicit espression)
\begin{equation}\label{2*}
S_N=\frac{N(N-2)}{4}\omega_N^{\frac{2}{N}},
\end{equation}
where $\omega_N$ is the volume of the unit ball in $\R^N$.
For $N\geq4$ denote by $C_1(N)$ and $C_2(N)$ the constants
\[
C_1(N)=
\begin{cases}\ds
\frac{4(N-4)^{\frac{N-4}{2}}}{N^{\frac{N-2}{2}}S_{N}^{\frac{N}{2}}} & N>4\\ \\
\ds \frac{1}{S_{4}^{2}}, & N=4,
\end{cases}
\qquad \mbox{ and   } \qquad
C_2(N)=\begin{cases}
\ds\frac{2(N-4)^{\frac{N-4}{2}}}{(N-2)^{\frac{N-2}{2}}S_{N}^{\frac{N}{2}}} & N>4\\ \\
\ds \frac{1}{S_{4}^{2}}, & N=4.
\end{cases}.
\]
Notice that $C_1(N)\leq  C_2(N)$.

Our result reads as follows:

\begin{theor}\label{our theorem} Let $a, b$  be positive numbers, $N\ge4$. \\
\noindent	(A)  If $ a^{\frac{N-4}{2}} b\geq C_1(N)$, then, for each $\lambda>0$ large enough and for each convex set $C\subseteq L^2(\Omega)$ whose closure in $L^2(\Omega)$ contains $H^1_0(\Omega)$, there exists $v^*\in C$ such that the functional
	\[u\to \frac{a}{2} \int_{\Omega}|\nabla u|^2 dx +\frac{b}{4} \left( \int_{\Omega}|\nabla u|^2 dx\right)^2-\frac{1}{2^\star}\int_{\Omega}|u|^{2^\star} dx-\frac{\lambda}{2}\int_{\Omega}|u(x)-v^*(x)|^2 dx\] has two global minima.
	
	\noindent (B) If $a^{\frac{N-4}{2}} b> C_2(N)$,
	then,
	for each $\lambda>0$ large enough and for each convex set $C\subseteq L^2(\Omega)$ whose closure in $L^2(\Omega)$ contains $H^1_0(\Omega)$, there exists $v^*\in C$ such that the problem
	
	$$
	\left\{
	\begin{array}{ll}
	- \left( a+b\ds\int_\Omega |\nabla u|^2 dx\right)\Delta u=
	|u|^{2^\star-2}u+\lambda (u-v^*(x)), & \hbox{ in } \Omega \\ \\
	u=0, & \hbox{on } \partial \Omega
	\end{array}
	\right.\eqno{(\mathcal{P}_{\lambda})}
	$$
	has at least three weak solutions, two of which are global minima in $H^1_0(\Omega)$ of the energy functional defined in (A).
\end{theor}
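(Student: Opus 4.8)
The plan is to run Ricceri's two-step scheme — first make the functional have two global minima, then get a third critical point from a mountain-pass argument — using the Kirchhoff term $\frac b4\|u\|^4$ to restore the compactness the critical exponent destroys; $C_1(N)$ is precisely what keeps $J(u):=\frac a2\|u\|^2+\frac b4\|u\|^4-\frac1{2^\star}\|u\|_{2^\star}^{2^\star}$ sequentially weakly lower semicontinuous, $C_2(N)$ precisely what keeps a Palais--Smale condition. Write $X=H^1_0(\OM)$ and $\Phi_{v,\lambda}(u):=J(u)-\frac\lambda2\|u-v\|_2^2$, so the functional in $(A)$ is $\Phi_{v^\star,\lambda}$ and $\Pl$ is its Euler--Lagrange equation. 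The first thing to establish is a compactness lemma: $a^{\frac{N-4}2}b\ge C_1(N)$ is equivalent to the one-variable inequality $\frac a2 s^2+\frac b4 s^4\ge\frac{S_N^{-2^\star/2}}{2^\star}s^{2^\star}$ ($s\ge0$), which with $\|u\|_{2^\star}^{2^\star}\le S_N^{-2^\star/2}\|u\|^{2^\star}$ gives $J\ge0$ on $X$, $\min_X J=J(0)=0$; and for $u_n\rightharpoonup u$ (along a subsequence realising the $\liminf$) the Brezis--Lieb lemma gives $\|u_n\|_{2^\star}^{2^\star}=\|u\|_{2^\star}^{2^\star}+\|u_n-u\|_{2^\star}^{2^\star}+o(1)$ and $\|u_n\|^2=\|u\|^2+\|u_n-u\|^2+o(1)$, and expanding the quartic term and invoking the displayed inequality with $s=\|u_n-u\|$ yields $\liminf_nJ(u_n)\ge J(u)$. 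The same splitting shows each sublevel set $\{\Phi_{v,\lambda}\le r\}$ is bounded (immediate from the sign of the quartic coefficient if $N>4$, or $N=4$ and $b>C_1(4)$; in the borderline case $N=4$, $b=C_1(4)$, a norm-divergent sequence on a sublevel set is a Sobolev near-extremal, hence concentrates, so its $L^2$-norm is negligible and $\frac a2\|\cdot\|^2$ dominates — while $H^1_0(\OM)$ carries no Sobolev extremal). As $u\mapsto\|u-v\|_2^2$ is weakly continuous ($X\hookrightarrow L^2(\OM)$ compactly), each $\Phi_{v,\lambda}$ is then sequentially weakly l.s.c.\ with weakly compact sublevel sets; in particular it attains its infimum.

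For $(A)$ I would apply Ricceri's minimax theorem — the one in which a strict inequality $\sup_Y\inf_X<\inf_X\sup_Y$ produces $v^\star\in Y$ at which the partial function has at least two global minima — to $f(u,v):=\Phi_{v,\lambda}(u)$ on $X\times C$. By the lemma $f(\cdot,v)$ is weakly l.s.c.\ and inf-compact, and $f(u,\cdot)$ is concave and continuous on $C\subseteq L^2(\OM)$. Since $\overline C^{\,L^2}\supseteq X$, $\sup_{v\in C}f(u,v)=J(u)-\frac\lambda2\operatorname{dist}_{L^2}(u,C)^2=J(u)$, so $\inf_{u\in X}\sup_{v\in C}f=\inf_X J=0$. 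The function $\alpha(v):=\inf_{u\in X}f(u,v)$ is concave, u.s.c.\ and $\to-\infty$ as $\|v\|_2\to\infty$ (since $\alpha(v)\le f(0,v)=-\frac\lambda2\|v\|_2^2$), hence attains its supremum over $L^2(\OM)$ at some $\bar v$; and for $\lambda>a\lambda_1$ ($\lambda_1$ the first Dirichlet eigenvalue, $e_1$ an eigenfunction with $\|e_1\|=1$), using $2<2^\star\le4$, $\alpha(0)\le J(te_1)-\frac{\lambda}{2\lambda_1}t^2=\big(\frac a2-\frac{\lambda}{2\lambda_1}\big)t^2+o(t^2)<0$ for small $t>0$, while $\alpha(v)\le-\frac\lambda2\|v\|_2^2<0$ for $v\ne0$; thus $\sup_{v\in C}\alpha(v)\le\alpha(\bar v)<0=\inf_{u\in X}\sup_{v\in C}f$. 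The strict minimax inequality holds, and Ricceri's theorem gives $v^\star\in C$ for which $\Phi_{v^\star,\lambda}$ has two global minima in $X$ — this is $(A)$, with ``$\lambda$ large'' meaning $\lambda>a\lambda_1$.

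For $(B)$, $a^{\frac{N-4}2}b>C_2(N)\ge C_1(N)$, so $(A)$ furnishes $v^\star\in C$ and two distinct global minima $u_1\ne u_2$ of the $C^1$ functional $\Phi_{v^\star,\lambda}$ — already two critical points, hence two weak solutions of $\Pl$. For a third I would show $\Phi_{v^\star,\lambda}$ satisfies Palais--Smale: a $(PS)$ sequence is bounded because $\Phi_{v^\star,\lambda}$ is coercive (automatic for $N>4$; for $N=4$ because $b>C_2(4)=S_4^{-2}$), so $u_n\rightharpoonup u$, and with $\delta^2=\lim\|u_n-u\|^2$, $\tau^{2^\star}=\lim\|u_n-u\|_{2^\star}^{2^\star}$, Brezis--Lieb together with $\langle\Phi_{v^\star,\lambda}'(u_n),u_n-u\rangle\to0$ gives $\big(a+b\lim_n\|u_n\|^2\big)\delta^2=\tau^{2^\star}\le S_N^{-2^\star/2}\delta^{2^\star}$; if $\delta>0$ this forces $a\le S_N^{-2^\star/2}\delta^{2^\star-2}-b\delta^2\le\max_{\sigma>0}\big(S_N^{-2^\star/2}\sigma^{2^\star-2}-b\sigma^2\big)$, and $a^{\frac{N-4}2}b>C_2(N)$ says exactly that this maximum is $<a$ — a contradiction, so $\delta=0$ and $u_n\to u$ strongly. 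With $(PS)$ in hand, the mountain-pass theorem between the two minima $u_1,u_2$ (in the form giving a critical point distinct from two prescribed local minima, in the spirit of Pucci--Serrin) yields a third critical point $u_3$, hence a third weak solution of $\Pl$; the other two are the global minima, and if the minimum set is not discrete there are anyway infinitely many critical points.

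The main obstacle is exactly to pin down $C_1(N)$ and $C_2(N)$ as the sharp thresholds: that $J$ is sequentially weakly l.s.c.\ with bounded sublevels, and that $\Phi_{v^\star,\lambda}$ has $(PS)$. Both can fail because $X\hookrightarrow L^{2^\star}(\OM)$ is not compact, so the concentration term $\|u_n-u\|_{2^\star}$ cannot be discarded for free as in the subcritical setting of Ricceri's original question; only a sufficiently large $b$ (compared with $a$ and $S_N$) lets the Kirchhoff term absorb it, and the two sharp one-variable optimisations above quantify this. Granting these, the minimax argument for $(A)$ and the mountain-pass step for $(B)$ are routine.
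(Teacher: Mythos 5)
Your proposal is correct and follows the same overall architecture as the paper: the Br\'ezis--Lieb splitting plus the one-variable inequality governed by $C_1(N)$ gives sequential weak lower semicontinuity of $\mathcal F$; Ricceri's strict minimax theorem gives (A); Palais--Smale plus the Pucci--Serrin mountain pass gives (B). Two sub-steps, however, are carried out by genuinely different means. For the strict minimax inequality the paper introduces the implicit threshold $\lambda^\star$ of \eqref{lambdastar} and, for $\lambda>\lambda^\star$, splits the supremum of $h(v)=\inf_u f(u,v)$ over the regions $\|v-v_0\|_2\le\delta$ and $\|v-v_0\|_2\ge\delta$ around a global minimizer $v_0$ of $\mathcal F$; you instead note that $\alpha(v)=\inf_u f(u,v)$ is concave (an infimum of concave functions), upper semicontinuous and dominated by $f(0,v)=-\frac{\lambda}{2}\|v\|_2^2$, hence attains its supremum on $L^2(\Omega)$, and that it is everywhere strictly negative once $\lambda>a\lambda_1$ (testing with $t e_1$); this buys an explicit admissible range of $\lambda$, and the attainment step correctly rules out the trap of inferring $\sup\alpha<0$ from mere pointwise negativity. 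For the (PS) condition the paper runs Lions' concentration--compactness with cut-off functions, while your direct Br\'ezis--Lieb computation applied to $\langle\mathcal E'(u_n),u_n-u\rangle$ reaches the same one-variable positivity condition (the paper's $f_1>0$, i.e.\ $a^{\frac{N-4}{2}}b>C_2(N)$) with less machinery. Finally, your treatment of coercivity of $f(\cdot,v)$ in the borderline case $N=4$, $b=C_1(4)=S_4^{-2}$ --- where the quartic terms cancel and one must use that Sobolev minimizing sequences on a bounded domain concentrate, so that $\|u_n\|_2^2=o(\|u_n\|^2)$ --- addresses a point the paper passes over silently when invoking Eberlein--\v{S}mulyan for inf-compactness; that step is sketched rather than fully written, but the idea is sound and standard. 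All steps check out.
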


The paper is motivated by a recent work of Ricceri  where the author studied problem $(\mathcal P_\lambda)$ in the subcritical case, i.e. when $|u|^{2^\star-2}u$ is replaced by $|u|^{p-2}u$ with $p<2^\star$. In \cite[Proposition 1]{R},  the existence of two global minima  for the energy functional (and three solutions for the associated  Kirchhoff problem) is obtained for every $a\geq 0$ and $b>0$. In the same paper, the following challenging question was raised (see \cite[Problem 1]{R}):
\begin{question}
	Does the  conclusion of Proposition 1  hold if $N>4$ and $p=2^\star$?
\end{question}

Notice that, for $N>4$ and $p=2^\star$ the energy functional associated to $\Pl$ is bounded from below while if $N=4(=2^\star)$ this is not true any more for arbitrary $b$. Moreover, when $p=2^\star$ the embedding of $H^1_0(\Omega)$  into $L^{p}(\Omega)$ fails to be compact and one can not apply directly the abstract tool which leads to \cite[Theorem 1 \& Proposition 1]{R}.

	The main result of the present note gives a partial positive answer to the above question and prove that Proposition 1 of \cite{R} holds for $p=2^\star$ and  $N\geq 4$   provided that $a$ and $b$ satisfies a suitable crucial inequality.
Namely, we prove that the interaction between the Kirchhoff type operator and the critical nonlinearity ensures the sequentially weakly lower semicontinuity of the energy functional, a key property which allows to apply the minimax theory developed in  \cite[Theorem 3.2]{R1} (see also Theorem \ref{minimax theorem} below).

\section{Proofs}

The proof of Theorem \ref{our theorem} relies on the following key lemma  (see  \cite{FFK} for a deeper study on this topic).

\begin{lem}\label{semicontinuity}
	Let $N\geq 4$ and $a, b$  be positive numbers such that $ a^{\frac{N-4}{2}} b\geq C_1(N)$. Denote by $\mathcal F:H^1_0(\Omega)\to\R$ the functional
	\[\mathcal F(u)=\frac{a}{2}\|u\|^2+\frac{b}{4} \|u\|^4-\frac{1}{2^\star}\|u\|^{2^\star}_{2^\star} \qquad \mbox{for every }\  u \in H^1_0(\Omega).\]
	Then, $\mathcal F$ is sequentially weakly lower semicontinuous in $H^1_0(\Omega)$.
	\end{lem}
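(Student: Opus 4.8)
The plan is to exploit the Brezis--Lieb lemma in order to split, along a weakly convergent sequence, the contribution of the weak limit from that of the (possibly concentrating) remainder, and then to observe that the assumption $a^{\frac{N-4}{2}}b\ge C_1(N)$ is precisely what forces the remainder to carry a \emph{nonnegative} amount of energy; this is the sense in which the interplay between the quartic Kirchhoff term and the critical term restores semicontinuity.

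First I would reduce to a convenient subsequence. Let $u_n\rightharpoonup u$ in $H^1_0(\Omega)$. To prove $\mathcal F(u)\le\liminf_n\mathcal F(u_n)$ it suffices to argue along a subsequence realizing the lower limit; passing to a further subsequence we may assume, by the Rellich--Kondrachov theorem, that $u_n\to u$ in $L^q(\Omega)$ for every $q<2^\star$ and $u_n\to u$ a.e.\ in $\Omega$. Being bounded in $H^1_0(\Omega)$, the sequence $(u_n)$ is also bounded in $L^{2^\star}(\Omega)$.

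Next I would decouple the energy. Set $d_n:=\|u_n-u\|^2\ge0$, a bounded sequence. Since $u_n\rightharpoonup u$ gives $\int_\Omega\nabla u_n\cdot\nabla u\,dx\to\|u\|^2$, one has $\|u_n\|^2=\|u\|^2+d_n+o(1)$, hence $\|u_n\|^4=\|u\|^4+2\|u\|^2 d_n+d_n^2+o(1)$. The Brezis--Lieb lemma yields $\|u_n\|_{2^\star}^{2^\star}=\|u\|_{2^\star}^{2^\star}+\|u_n-u\|_{2^\star}^{2^\star}+o(1)$, while the Sobolev inequality gives $\|u_n-u\|_{2^\star}^{2^\star}\le S_N^{-2^\star/2}d_n^{2^\star/2}$. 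Inserting these relations into the definition of $\mathcal F$ and discarding the nonnegative term $\tfrac b2\|u\|^2 d_n$, I obtain
\[
\mathcal F(u_n)\ \ge\ \mathcal F(u)+h(d_n)+o(1),\qquad
h(t):=\frac a2\,t+\frac b4\,t^2-\frac1{2^\star}\,S_N^{-2^\star/2}\,t^{2^\star/2}.
\]
Thus the statement reduces to the one-variable claim $h(t)\ge0$ for every $t\ge0$, which upon passing to the limit (along the chosen subsequence) gives $\liminf_n\mathcal F(u_n)\ge\mathcal F(u)$.

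It remains to verify this scalar inequality, and this is the only genuinely delicate point. For $N=4$ one has $2^\star/2=2$, so $h(t)=\tfrac a2 t+\tfrac14(b-S_4^{-2})t^2$, which is nonnegative on $[0,\infty)$ exactly when $b\ge S_4^{-2}=C_1(4)$. For $N>4$ one has $1<2^\star/2<2$; writing $\varphi(t):=h(t)/t$ for $t>0$, the derivative $\varphi'(t)=\tfrac b4-\tfrac{2^\star/2-1}{2^\star}S_N^{-2^\star/2}t^{2^\star/2-2}$ is strictly increasing, diverges to $-\infty$ as $t\to0^+$ and tends to $b/4>0$ as $t\to+\infty$, so $\varphi$ attains its minimum at the unique $t_0>0$ with $\varphi'(t_0)=0$. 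Substituting $t_0$ back gives $\min_{t>0}\varphi(t)=\tfrac a2-\tfrac{2-2^\star/2}{2^\star}S_N^{-2^\star/2}t_0^{2^\star/2-1}$, and a direct computation using $2^\star=\tfrac{2N}{N-2}$ and the explicit value of $S_N$ shows that this quantity is nonnegative if and only if $a^{\frac{N-4}{2}}b\ge C_1(N)$; this is exactly where the precise form of $C_1(N)$ comes from. The main obstacle is therefore essentially bookkeeping: one must carefully keep track of the cross terms produced by the quartic Kirchhoff term and use the sharp Sobolev constant, after which the resulting inequality turns out to be sharp and to match the stated constraint. The remaining ingredients---the a.e.-convergence reduction and the Brezis--Lieb splitting---are routine.
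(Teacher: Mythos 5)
Your proposal is correct and follows essentially the same route as the paper: a Brezis--Lieb splitting of the critical term combined with the expansion of $\|u_n\|^2$ and $\|u_n\|^4$, the Sobolev inequality applied to $u_n-u$, and the reduction to the one-variable inequality $h(t)\ge 0$ (your $\varphi(t)=h(t)/t$ with $t=\|u_n-u\|^2$ is exactly the paper's function $f$), whose minimum-value condition is precisely $a^{\frac{N-4}{2}}b\ge C_1(N)$. The only difference is cosmetic: you spell out the scalar minimization and the subsequence/a.e.-convergence step a bit more explicitly than the paper does.
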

	
	\begin{proof}
		Fix $u \in H^1_0(\Omega)$ and let $\{u_n\} \subset H^1_0(\Omega)$ such that $u_n\rightharpoonup u$ in $H^1_0(\Omega)$. Thus,
		\begin{align*}
		\mathcal{F}(u_n)-\mathcal{F}(u) =&\frac{a}{2}(\|u_n\|^2-\|u\|^2)+\frac{b}{4}(\|u_n\|^4-\|u\|^4)\\ &-\frac{1}{2^\star}\left(\|u_n\|_{2^\star}^{2^\star}-\|u\|_{2^\star}^{2^\star}\right).
		\end{align*}
		It is clear that \begin{align*}\|u_n\|^2-\|u\|^2&=\|u_n-u\|^2+2\int_{\Omega}\nabla(u_n-u)\nabla u \\ &= \|u_n-u\|^2+o(1),
		\end{align*}
		and
		\begin{align*}
		\|u_n\|^4-\|u\|^4&=\left(\|u_n-u\|^2+o(1)\right)\left(\|u_n-u\|^2+2\int_{\Omega}\nabla u_n\nabla u\right)\\&=\left(\|u_n-u\|^2+o(1)\right)\left(\|u_n-u\|^2+2\int_{\Omega}\nabla (u_n-u)\nabla u+2\|u\|^2\right)\\
		&=\left(\|u_n-u\|^2+o(1)\right)\left(\|u_n-u\|^2+2\|u\|^2+o(1)\right).
		\end{align*}
		Moreover, from the Br\'ezis-Lieb lemma, one has
		
	 $$\|u_n\|_{2^\star}^{2^\star}-\|u\|_{2^\star}^{2^\star}=\|u_n-u\|_{2^\star}^{2^\star}+o(1).$$
		
		Putting together the above outcomes,
		\begin{align*}
		\mathcal{F}(u_n)-\mathcal{F}(u)=&\frac{a}{2}\|u_n-u\|^2+\frac{b}{4}\left(\|u_n-u\|^4+2\|u\|^2\|u_n-u\|^2\right)-\frac{1}{2^\star}\|u_n-u\|_{2^\star}^{2^\star}+o(1) \\{\geq}& \frac{a}{2}\|u_n-u\|^2+\frac{b}{4}\left(\|u_n-u\|^4+2\|u\|^2\|u_n-u\|^2\right)-\frac{{S}_N^{-\frac{2^\star}{2}}}{2^\star}\|u_n-u\|^{2^\star}+o(1)
		\\ \geq& \frac{a}{2}\|u_n-u\|^2 +\frac{b}{4}\|u_n-u\|^4-\frac{{S}_N^{-\frac{2^\star}{2}}}{2^\star}\|u_n-u\|^{2^\star}+o(1)\\=& \|u_n-u\|^2 \left(\frac{a}{2}+\frac{b}{4}\|u_n-u\|^2-\frac{{S}_N^{-\frac{2^\star}{2}}}{2^\star}\|u_n-u\|^{2^\star-2}\right)+o(1).
		\end{align*}
		
		Denote by $f:[0,+\infty[\to\R$  the  function $\displaystyle f(x)=\frac{a}{2}+\frac{b}{4}x^2-\frac{{S}_N^{-\frac{2^\star}{2}}}{2^\star}x^{2^\star-2}$. We claim that $f(x)\geq 0$ for all $x\geq 0$.
		
		\bigskip
		
		Indeed, when $N=4$, and $b{S}_4^2\geq 1$,
		\[f(x)=\frac{a}{2}+\frac{b}{4}x^2-\frac{{S}_4^{-2}}{4}x^2=\frac{a}{2}+\frac{1}{4}\left(b-\frac{1}{{S}_4^2}\right)x^2\geq \frac{a}{2}.\]
		
		If $N>4$, it is immediately seen that $f$ attains its minimum at  $$x_0=\left(\frac{2^\star }{2(2^\star-2)}{S}_N^{\frac{2^\star}{2}}b\right)^{\frac{1}{2^\star-4}}$$ and the claim is a consequence of the assumption $\displaystyle a^\frac{N-4}{2}b\geq C_1(N)$.
		
		Thus,  $$\liminf_{n\to \infty}(\mathcal{F}(u_n)-\mathcal{F}(u))\geq \liminf_{n \to \infty}\|u_n-u\|^2 f(\|u_n-u\|)\geq 0,$$ and the thesis follows.
	\end{proof}
	
	\begin{rem}We point out that the constant $C_1(N)$ in Lemma \ref{semicontinuity} is optimal, i.e. if
		$ a^{\frac{N-4}{2}} b< C_1(N)$  the functional $\mathcal F$ is no longer sequentially weakly lower semicontinuous (see \cite{FFK}).
		\end{rem}
 	
 In the next lemma we prove the Palais Smale property for our energy functional. Notice that the same constraints on $a$ and $b$ appear in \cite{H1} where such  property was investigated  for the critical Kirchhoff equation on closed manifolds by employing the $H^1$ (which is the underlying Sobolev space) decomposition.
 	\begin{lem}\label{Palais Smale}
 		Let $N \ge 4$ and $a,b$ be positive numbers such that $a^{\frac{N-4}{2}}b>C_{2}(N)$. For $\lambda>0, v^*\in H_{0}^{1}(\Omega)$ denote by $\mathcal{E}:H_{0}^{1}(\Omega)\to\mathbb{R}$ the  functional
 		defined by
 		\[
 		\mathcal{E}(u)=\frac{a}{2}\|u\|^{2}+\frac{b}{4}\|u\|^{4}-\frac{1}{2^{\star}}\|u\|_{2^{\star}}^{2^{\star}}-\frac{\lambda}{2}\|u-v^{\star}\|_{2}^{2}
 		 \qquad \mbox{for every }\  u \in H^1_0(\Omega).\] Then, $\mathcal E$
 		satisfies the Palais-Smale (shortly (PS)) condition.
 	\end{lem}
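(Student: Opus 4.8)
The plan is to show that any Palais--Smale sequence $\{u_n\}\subset H^1_0(\Omega)$ for $\mathcal E$ is bounded, extract a weakly convergent subsequence $u_n\rightharpoonup u$, and then upgrade the weak convergence to strong convergence, which is all that is needed since $\mathcal E'$ is of the form ``bounded operator minus compact operator.'' \textbf{Boundedness.} Suppose $\mathcal E(u_n)\to c$ and $\mathcal E'(u_n)\to 0$ in $H^{-1}(\Omega)$. Testing $\mathcal E'(u_n)$ against $u_n$ and combining with $\mathcal E(u_n)$ in the standard way (a combination like $\mathcal E(u_n)-\frac{1}{2^\star}\langle \mathcal E'(u_n),u_n\rangle$), the critical term cancels; using $2<4<2^\star$ one is left with a coercive expression in $\|u_n\|$ up to the lower-order term $-\frac\lambda2\|u_n-v^\star\|_2^2$, which is controlled by $\|u_n\|^2$ via the Sobolev/Poincar\'e embedding and absorbed because the leading term is of order $\|u_n\|^4$. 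Hence $\{u_n\}$ is bounded in $H^1_0(\Omega)$.

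\textbf{Passing to the limit.} Up to a subsequence, $u_n\rightharpoonup u$ in $H^1_0(\Omega)$, $u_n\to u$ in $L^2(\Omega)$ and a.e.\ in $\Omega$, and $\|u_n\|^2\to t^2$ for some $t\ge\|u\|$. Writing $w_n=u_n-u$, the Br\'ezis--Lieb lemma gives $\|u_n\|_{2^\star}^{2^\star}=\|u\|_{2^\star}^{2^\star}+\|w_n\|_{2^\star}^{2^\star}+o(1)$ and, since $\nabla u_n\rightharpoonup\nabla u$, also $\|u_n\|^2=\|u\|^2+\|w_n\|^2+o(1)$. The key step is to exploit $\mathcal E'(u_n)\to 0$: testing against $u_n$ and against $u$ and subtracting, the $L^2$ term disappears in the limit (strong $L^2$ convergence), and one obtains, with $\ell:=\lim\|w_n\|^2$,
\[
\bigl(a+bt^2\bigr)\ell=\lim_{n}\|w_n\|_{2^\star}^{2^\star}\le S_N^{-2^\star/2}\ell^{2^\star/2},
\]
where $t^2=\|u\|^2+\ell$. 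So either $\ell=0$, giving $u_n\to u$ strongly and we are done, or $\ell>0$ and $\bigl(a+b(\|u\|^2+\ell)\bigr)\le S_N^{-2^\star/2}\ell^{(2^\star-4)/2}=S_N^{-2^\star/2}\ell^{2/(N-2)}$, in particular $a+b\ell\le S_N^{-2^\star/2}\ell^{2/(N-2)}$, i.e.\ $\ell$ must be large.

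\textbf{Ruling out concentration.} The remaining task is to derive a contradiction from the dichotomy's second branch using the energy level. From $\mathcal E(u_n)\to c$ one should extract a nontrivial lower bound $c\ge \mathcal E(u)+\Phi(\ell,\|u\|)$ for the energy carried by the bubble, where $\Phi$ is built from $\frac a2\ell+\frac b4(\ell^2+2\|u\|^2\ell)-\frac1{2^\star}\lim\|w_n\|_{2^\star}^{2^\star}$; combined with the inequality $(a+bt^2)\ell\le S_N^{-2^\star/2}\ell^{2^\star/2}$ one shows this quantity is strictly positive and bounded below by a constant depending only on $a,b,N$ — but then one compares with an upper estimate for $c$. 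I expect the precise mechanism to be: the function $g(x)=\frac a2 x+\frac b4 x^2-\frac{S_N^{-2^\star/2}}{2^\star}x^{2^\star/2}$ (essentially $x\,f(\sqrt x)$ with $f$ as in Lemma~\ref{semicontinuity}) is, under the \emph{strict} inequality $a^{(N-4)/2}b>C_2(N)$, strictly positive and in fact bounded away from $0$ on the set where $a+bx\le S_N^{-2^\star/2}x^{2/(N-2)}$, which is exactly the regime forced by $\ell>0$; this is where $C_2(N)$ (rather than $C_1(N)$) enters, the constant $C_2$ being tuned so that the minimum of $g$ over that bad set is positive. Then $c=\mathcal E(u)+g(\ell)+(\text{sign-definite remainder})>\mathcal E(u)$ cannot be reconciled with the fact that, along the PS sequence at a general level, no such gap is available — more precisely, one obtains an outright contradiction because $(a+bt^2)\ell\le S_N^{-2^\star/2}\ell^{2^\star/2}$ together with the structure of $g$ forces $g(\ell)<0$, contradicting $g(\ell)\ge 0$. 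Hence $\ell=0$, $u_n\to u$ strongly in $H^1_0(\Omega)$, and the (PS) condition holds.

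The main obstacle is the last step: correctly setting up the elementary but delicate one-variable inequality that says ``$g(\ell)\ge 0$ on all of $[0,\infty)$ while the PS relations force $g(\ell)\le$ (something negative) when $\ell>0$,'' and verifying that the sharp constant making this work is precisely $C_2(N)$ — in particular checking the computation of the minimizer of the relevant auxiliary function and the borderline case $N=4$ separately, exactly as in the proof of Lemma~\ref{semicontinuity}. Everything else (boundedness, Br\'ezis--Lieb, the split of $\mathcal E'$ into a strongly monotone part plus a compact part) is routine.
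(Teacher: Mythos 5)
Your route is genuinely different from the paper's: instead of Lions' concentration--compactness with cut-off functions $\phi_\varepsilon$ (which is what the paper does), you use the global Br\'ezis--Lieb splitting $w_n=u_n-u$ and test $\mathcal E'(u_n)$ against $u_n$ and $u$. That part is sound: it does yield $(a+bt^2)\ell=\lim_n\|w_n\|_{2^\star}^{2^\star}\le S_N^{-2^\star/2}\ell^{2^\star/2}$ with $t^2=\|u\|^2+\ell$, and it is arguably more elementary than the paper's argument. The genuine gap is that you never close this argument, and the mechanism you propose for closing it is wrong. Dividing by $\ell>0$ gives $a+b\ell\le a+bt^2\le S_N^{-2^\star/2}\ell^{(2^\star-2)/2}$ (your exponent $(2^\star-4)/2$ is a slip; $(2^\star-2)/2=2/(N-2)$ is the correct one), i.e.\ $f_1(\ell)\le 0$ for $f_1(x)=a+bx-S_N^{-2^\star/2}x^{\frac{2^\star-2}{2}}$, which is exactly the auxiliary function in the paper's proof; the whole role of the hypothesis $a^{\frac{N-4}{2}}b>C_2(N)$ is that it forces $f_1>0$ on $[0,+\infty)$ (minimize $f_1$ for $N>4$; for $N=4$, $f_1(x)=a+(b-S_4^{-2})x$), and that one-variable verification — which you explicitly defer as ``the main obstacle'' — is the decisive step. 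Your substitute mechanism via $g(x)=\frac a2x+\frac b4x^2-\frac{S_N^{-2^\star/2}}{2^\star}x^{2^\star/2}$ and energy-level comparisons does not work: from $(a+bt^2)\ell\le S_N^{-2^\star/2}\ell^{2^\star/2}$ one only gets
\[
g(\ell)\le \tfrac12 S_N^{-2^\star/2}\ell^{2^\star/2}-\tfrac1{2^\star}S_N^{-2^\star/2}\ell^{2^\star/2}>0,
\]
so no contradiction of the form ``$g(\ell)<0$ versus $g(\ell)\ge0$'' arises; moreover $g\ge0$ is the $C_1(N)$ condition of Lemma \ref{semicontinuity}, not the $C_2(N)$ condition, and no comparison with the level $c$ is needed at all.

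A secondary error is the boundedness step. For $N\ge4$ one has $2^\star\le 4$ (with equality only for $N=4$), so the ordering ``$2<4<2^\star$'' is false, and the combination $\mathcal E(u_n)-\frac1{2^\star}\langle\mathcal E'(u_n),u_n\rangle$ gives the quartic term the coefficient $b\bigl(\frac14-\frac1{2^\star}\bigr)$, which is negative for $N>4$; the absorption argument as you state it fails. The correct (and simpler) fix is the paper's: $\mathcal E$ is itself coercive, since for $N>4$ the term $\frac b4\|u\|^4$ dominates $S_N^{-2^\star/2}\|u\|^{2^\star}/2^\star$ because $2^\star<4$, and for $N=4$ because $b>C_2(4)=S_4^{-2}$; hence $\mathcal E(u_n)\to c$ alone bounds $\{u_n\}$. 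With these two repairs (coercivity for boundedness, and the positivity of $f_1$ under $a^{\frac{N-4}{2}}b>C_2(N)$ to rule out $\ell>0$), your decomposition argument would give a complete proof along a different path from the paper's concentration--compactness one.
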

 	
 	\begin{proof}
 	Let $\{u_{n}\}$ be a  (PS) sequence for $\mathcal E$, that is
 		\[
 		\begin{cases}
 		\mathcal{E}(u_{n})\to c\\
 		\mathcal{E}'(u_{n})\to0
 		\end{cases}\mbox{as }n\to\infty.
 		\]
 	Since $\mathcal E$ is coercive,  $\{u_{n}\}$ is bounded and there exists $u\in H_{0}^{1}(\Omega)$ such that (up to a subsequence)
 		
 		\begin{align*}
 		u_{n} & \rightharpoonup u\mbox{ in }H_{0}^{1}(\Omega),\\
 		u_{n} & \to u\mbox{ in }L^{p}(\Omega),\ p\in[1,2^{\star}),\\
 		u_{n} & \to u\mbox{ a.e. in }\Omega.
 		\end{align*}
 		Using the second concentration compactness lemma of Lions \cite{L}, there exist an at most countable index set $J$,
 		a set of points $\{x_{j}\}_{j\in J}\subset\overline\Omega$ and two families of positive
 		numbers $\{\eta_{j}\}_{j\in J}$, $\{\nu_{j}\}_{j\in J}$ such that
 		\begin{align*}
 		|\nabla u_{n}|^{2} & \rightharpoonup d\eta\geq|\nabla u|^{2}+\sum_{j\in J}\eta_{j}\delta_{x_{j}},\\
 		|u_{n}|^{2^\star} & \rightharpoonup d\nu=|u|^{2^\star}+\sum_{j\in J}\nu_{j}\delta_{x_{j}},
 		\end{align*}
 		 (weak star convergence in the sense of measures), where $\delta_{x_{j}}$ is the Dirac mass concentrated at
 		$x_{j}$ and such that
 $$		S_{N}  \nu_{j}^{\frac{2}{2^\star}}\leq\eta_{j} \qquad \mbox{for every $j\in J$}.$$
 Next, we will prove that the index set $J$ is empty. Arguing
 		by contradiction, we may assume that there exists a $j_{0}$ such
 		that $\nu_{j_{0}}\neq0$. Consider now, for $\varepsilon>0$ a non negative  cut-off function $\phi_\varepsilon$ such that
 		\begin{align*}
 		&\phi_{\varepsilon}  =1\mbox{ on }B(x_{0},\varepsilon),\\
 		&\phi_{\varepsilon}  =0\mbox{ on } \Omega\setminus B(x_{0},2\varepsilon),\\
 		&|\nabla\phi_{\varepsilon}|  \leq\frac{2}{\varepsilon}.
 		\end{align*}
 		It is clear that the sequence $\{u_{n}\phi_{\varepsilon}\}_{n}$ is
 		bounded in $H_{0}^{1}(\Omega)$,  so that
 		\[
 		\lim_{n\to\infty}\mathcal{E}'(u_{n})(u_{n}\phi_{\varepsilon})=0.
 		\]
 		Thus
 		\begin{align}\label{calc 1}
 		o(1) & =(a+b\|u_{n}\|^{2})\int_{\Omega}\nabla u_{n}\nabla(u_{n}\phi_{\varepsilon})-\int_{\Omega}|u_{n}|^{2^\star}\phi_{\varepsilon}-\lambda\int_{\Omega}(u_{n}-v^{*})(u_{n}\phi_{\varepsilon}) \nonumber \\
 		& =(a+b\|u_{n}\|^{2})\left(\int_{\Omega}|\nabla u_{n}|^{2}\phi_{\varepsilon}+\int_{\Omega}u_{n}\nabla u_{n}\nabla\phi_{\varepsilon}\right)-\int_{\Omega}|u_{n}|^{2^\star}\phi_{\varepsilon}-\lambda\int_{\Omega}(u_{n}-v^{*})(u_{n}\phi_{\varepsilon}).
 		\end{align}
 		Moreover, using  H\"{o}lder inequality, one has
 		\[
 		\left|\int_{\Omega}(u_{n}-v^{*})(u_{n}\phi_{\varepsilon})\right|\leq \left(\int_{B(x_{0},2\varepsilon)}(u_{n}-v^{*})^2\right)^\frac{1}{2} \left(\int_{B(x_{0},2\varepsilon)}u_n^2\right)^\frac{1}{2},
 		\] so that
 \[\lim_{\varepsilon\to0}\lim_{n\to\infty}\int_{\Omega}(u_{n}-v^{*})(u_{n}\phi_{\varepsilon})=0.\]
 Also,
 \begin{eqnarray*}
 \left|\int_\Omega u_{n}\nabla u_{n}\nabla\phi_{\varepsilon}\right|&=&\left|\int_{B(x_{0},2\varepsilon)}u_{n}\nabla u_{n}\nabla\phi_{\varepsilon}\right|\leq \left(\int_{B(x_{0},2\varepsilon)}|\nabla u_n|^2\right)^\frac{1}{2}
 \left(\int_{B(x_{0},2\varepsilon)}|u_n\nabla \phi_\varepsilon|^2\right)^\frac{1}{2}\\
  &\leq&  C \left(\int_{B(x_{0},2\varepsilon)}|u_n\nabla \phi_\varepsilon|^2\right)^\frac{1}{2}.
  \end{eqnarray*}
 Since $$\lim_{n\to\infty}\int_{B(x_{0},2\varepsilon)}|u_n\nabla \phi_\varepsilon|^2=\int_{B(x_{0},2\varepsilon)}|u\nabla \phi_\varepsilon|^2,$$ and
  \begin{eqnarray*}
   \left(\int_{B(x_{0},2\varepsilon)}|u\nabla \phi_\varepsilon|^2\right)^\frac{1}{2}&\leq &
 \left(\int_{B(x_{0},2\varepsilon)} |u|^{2^\star}\right)^\frac{1}{2^\star}
 \left(\int_{B(x_{0},2\varepsilon)}|\nabla \phi_\varepsilon|^N \right)^\frac{1}{N}\\
 &\leq& C \left(\int_{B(x_{0},2\varepsilon)} |u|^{2^\star}\right)^\frac{1}{2^\star}
\end{eqnarray*}
 we get

 		\[
 		\lim_{\varepsilon\to0}\lim_{n\to\infty}(a+b\|u_{n}\|^{2})\left|\int_\Omega u_{n}\nabla u_{n}\nabla\phi_{\varepsilon}\right|=0.
 		\]
 		Moreover, as $0\leq \phi_\varepsilon\leq 1$,
 \begin{eqnarray*}
 \lim_{n\to\infty}(a+b\|u_{n}\|^{2})\int_{\Omega}|\nabla u_{n}|^{2}\phi_{\varepsilon}&\geq&
 \lim_{n\to\infty}\left[a\int_{B(x_{0},2\varepsilon)}|\nabla u_{n}|^{2}\phi_{\varepsilon}+b\left(\int_{\Omega}|\nabla u_{n}|^{2}\phi_{\varepsilon}\right)^{2}\right]\\&\geq&
a\int_{B(x_{0},2\varepsilon)}|\nabla u|^{2}\phi_{\varepsilon}+b\left(\int_{\Omega}|\nabla u|^{2}\phi_{\varepsilon}\right)^{2}+a\eta_{j_{0}}+b\eta_{j_{0}}^{2}.
 \end{eqnarray*}
 So, as $\int_{B(x_{0},2\varepsilon)}|\nabla u|^{2}\phi_{\varepsilon}\to 0$ as $\varepsilon\to 0$,
 		\[
 		\lim_{\varepsilon\to0}\lim_{n\to\infty}(a+b\|u_{n}\|^{2})\int_{\Omega}|\nabla u_{n}|^{2}\phi_{\varepsilon} \geq a\eta_{j_{0}}+b\eta_{j_{0}}^{2}.\]
 		
 		Finally,
 		\begin{align*}
 		\lim_{\varepsilon\to0}\lim_{n\to\infty}\int_\Omega|u_{n}|^{2^\star}\phi_{\varepsilon} & =\lim_{\varepsilon\to0}\int_\Omega |u|^{2^\star}\phi_{\varepsilon}+\nu_{j_{0}}=\lim_{\varepsilon\to0}\int_{B(x_{0},2\varepsilon)} |u|^{2^\star}\phi_{\varepsilon}+\nu_{j_{0}}=\nu_{j_{0}}.
 		\end{align*}
 		Summing up the above outcomes, from \eqref{calc 1}
 		one obtains
 		\begin{align*}
 		0 & \geq a\eta_{j_{0}}+b\eta_{j_{0}}^{2}-\nu_{j_0}\geq a\eta_{j_{0}}+b\eta_{j_{0}}^{2}-S_{N}^{-\frac{2^\star}{2}}\eta_{j_{0}}^{\frac{2^\star}{2}}\\
 		& =\eta_{j_{0}}\left(a+b\eta_{j_{0}}-S_{N}^{-\frac{2^\star}{2}}\eta_{j_{0}}^{\frac{2^\star-2}{2}}\right).
 		\end{align*}
 		Denote by $f_{1}:[0,+\infty[\to\mathbb{R}$ the function ${\displaystyle f_{1}(x)=a+bx-S_{N}^{-\frac{2^\star}{2}}x^{\frac{2^\star-2}{2}}}$.
 		As before, assumptions on $a$ and $b$ imply that $f_{1}(x)>0$ for all $x\geq0$. Thus
 		\[
 		a+b\eta_{j_{0}}-S_{N}^{-\frac{2^\star}{2}}\eta_{j_{0}}^{\frac{2^\star-2}{2}}>0,
 		\]
 		therefore $\eta_{j_{0}}=0,$ which is a contradiction. Such conclusion  implies
 		that $J$ is empty, that is
 		\[\lim_{n\to\infty}\int_{\Omega}|u_n|^{2^\star}= \int_{\Omega}|u|^{2^\star}\]
and the uniform convexity of $L^{2^\star}(\Omega)$ implies that 		\[
 		u_{n}\to u\mbox{ in }L^{2^\star}(\Omega).
 		\]
 		Now, recalling that the derivative of the function $$u\to \frac{a}{2}\|u\|^{2}+\frac{b}{4}\|u\|^{4}$$  satisfies the $(S_+)$ property, in  a standard way one can see that $u_{n}\to u\mbox{ in }H_{0}^{1}(\Omega)$, which proves
 		our lemma.
 	\end{proof}
 	
 In the proof of our  result, the main tool is the following theorem:

 \begin{theor}[Ricceri \cite{R1}, Theorem 3.2]\label{minimax theorem}
 	Let $X$ be a topological space, $E$ a real Hausdorff
 	topological vector space, $C\subseteq E$ a convex set,
 	$f : X\times C \to \R$ a function which is lower semicontinuous,
 	inf--compact in $X$, and upper semicontinuous and concave in $C$. Assume also that
\begin{equation}\label{minimax}
\sup_{v\in C}\inf_{x\in X}f(x,v)<\inf_{x\in X}\sup_{v\in C} f(x,v).
\end{equation}
Then, there exists $v^*\in C$ such that the function $f(\cdot, v^*)$ has at least two global
minima.
 	\end{theor}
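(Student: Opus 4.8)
My plan is to prove the contrapositive of the conclusion: I assume that for every $v\in C$ the function $f(\cdot,v)$ has \emph{at most one} global minimum on $X$, and I derive that the minimax equality $\sup_{v\in C}\inf_{x\in X}f(x,v)=\inf_{x\in X}\sup_{v\in C}f(x,v)$ holds, contradicting \eqref{minimax}. Write $\alpha=\sup_{v\in C}\inf_{x\in X}f(x,v)$, $\beta=\inf_{x\in X}\sup_{v\in C}f(x,v)$, and set $\gamma(v)=\inf_{x\in X}f(x,v)$. Since each $f(\cdot,v)$ is lower semicontinuous and inf-compact, it attains its infimum by the Weierstrass theorem, and under the contrapositive assumption this minimizer is unique; this yields a well-defined map $v\mapsto m(v)$ with $f(m(v),v)=\gamma(v)$. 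As an infimum over $x$ of functions that are concave and upper semicontinuous in $v$, the marginal $\gamma$ is itself concave and upper semicontinuous, and $\sup_C\gamma=\alpha$. Dually, $g=\sup_{v\in C}f(\cdot,v)$ is lower semicontinuous and inf-compact, because its sublevel sets are closed and contained in those of any fixed $f(\cdot,v_0)$; hence $\inf_X g=\beta$ is attained at some $\bar x$.

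The heart of the proof is the following saddle-point claim: if $v^*\in C$ attains $\max_C\gamma=\alpha$ and $m(v^*)$ is the unique minimizer of $f(\cdot,v^*)$, then $v^*$ also maximizes the concave function $f(m(v^*),\cdot)$ over $C$. Granting this, $\sup_C f(m(v^*),\cdot)=f(m(v^*),v^*)=\gamma(v^*)=\alpha$, while $\sup_C f(m(v^*),\cdot)\ge\inf_X\sup_C f=\beta$; hence $\beta\le\alpha$, the sought contradiction with \eqref{minimax}. To prove the claim I argue by contradiction, assuming $f(m(v^*),\tilde v)>\alpha$ for some $\tilde v\in C$, and test along the segment $v_t=(1-t)v^*+t\tilde v\in C$, $t\in[0,1]$. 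Concavity in $v$ makes $t\mapsto\gamma(v_t)$ concave on $[0,1]$, and together with the upper semicontinuity of $\gamma$ this gives $\gamma(v_t)\to\gamma(v^*)=\alpha$ as $t\to0^+$. The concavity inequality $\gamma(v_t)=f(m(v_t),v_t)\ge(1-t)f(m(v_t),v^*)+t\,f(m(v_t),\tilde v)$, combined with $f(\cdot,\tilde v)\ge\gamma(\tilde v)$, confines every $m(v_t)$ with $t$ small to a fixed sublevel set $\{f(\cdot,v^*)\le\alpha+1\}$, which is compact by inf-compactness; lower semicontinuity of $f(\cdot,v^*)$ and the uniqueness of $m(v^*)$ then force $m(v_t)\to m(v^*)$. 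Finally, using $\gamma(v_t)\le\alpha$ and $f(m(v_t),v^*)\ge\gamma(v^*)=\alpha$ in the same inequality gives $f(m(v_t),\tilde v)\le\alpha$ for every $t>0$, and passing to the limit via lower semicontinuity of $f(\cdot,\tilde v)$ yields $f(m(v^*),\tilde v)\le\alpha$, contradicting the choice of $\tilde v$.

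The step I expect to be the genuine obstacle is guaranteeing that $\gamma$ \emph{attains} its supremum $\alpha$ at some $v^*\in C$: the set $C$ is only convex and $E$ carries no compactness, so the concave upper semicontinuous $\gamma$ need not attain $\sup_C\gamma$. The saddle mechanism above is local around a maximizer, so in its absence one is left with almost-maximizers $v_n$ satisfying $\gamma(v_n)\uparrow\alpha$, and promoting the resulting approximate picture to two \emph{exact} global minima is the delicate point. I would try to resolve it by anchoring the construction on the $X$-side, exploiting that $g=\sup_C f$ is inf-compact and therefore does attain its infimum at $\bar x$ with $\sup_C f(\bar x,\cdot)=\beta$, and then transferring this compactness to the parameter $v$; alternatively, one can run the segment argument inside Ricceri's general minimax framework of \cite{R1}, where a coercivity hypothesis plays the role of compactness of $C$. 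All the remaining ingredients—existence and uniqueness of $m(v)$, concavity and semicontinuity of $\gamma$ and $g$, the convergence $m(v_t)\to m(v^*)$, and the final passage to the limit—rest only on the Weierstrass theorem, elementary properties of concave functions, and the inf-compactness hypothesis, and are routine.
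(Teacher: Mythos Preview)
The paper does not prove this theorem; it is quoted from \cite{R1} as an external tool and then applied as a black box in the proof of Theorem~\ref{our theorem}. There is therefore no in-paper proof to compare your attempt against.

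Taken on its own, your argument has a real gap, and it is exactly the one you isolate: nothing in the hypotheses guarantees that the concave, upper semicontinuous marginal $\gamma(v)=\inf_{x\in X}f(x,v)$ attains its supremum on $C$. The set $C$ is merely convex in a Hausdorff topological vector space, with no compactness hypothesis at all, so a maximizer $v^*$ need not exist---and your entire saddle-point mechanism is anchored at such a $v^*$. When a maximizer is granted, the segment argument is correct: the inequality $(1-t)f(m(v_t),v^*)+t\,f(m(v_t),\tilde v)\le\gamma(v_t)\le\alpha$ combined with $f(m(v_t),v^*)\ge\gamma(v^*)=\alpha$ does force $f(m(v_t),\tilde v)\le\alpha$ for every $t>0$, and inf-compactness together with uniqueness do give $m(v_t)\to m(v^*)$, so the lower semicontinuous passage yields the contradiction. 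But without the maximizer the chain has no starting point; replacing $v^*$ by a near-maximizer $v_0$ destroys the crucial inequality $\gamma(v_t)\le\gamma(v_0)$, and one is left only with a directional-derivative bound that does not close. Your suggested rescues are not carried out, and the first---anchoring at a minimizer $\bar x$ of $g=\sup_{v\in C}f(\cdot,v)$---is problematic because $\bar x$ need not equal $m(v)$ for any $v\in C$, so the segment construction has nowhere to begin. Ricceri's proof in \cite{R1} avoids the attainment issue by a different route; to complete your approach you would need a genuinely new idea or to consult that reference directly.
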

\noindent {\bf Proof of Theorem \ref{our theorem}}
We apply Theorem \ref{minimax theorem} with $X=H^1_0(\Omega)$ endowed with the weak topology, $E=L^2(\Omega)$ with the strong topology, $C$ as in the assumptions.
Let $\mathcal F$ as in Lemma \ref{semicontinuity}, i.e.
\[\mathcal F(u)=\frac{a}{2}\|u\|^2+\frac{b}{4} \|u\|^4-\frac{1}{2^\star}\|u\|^{2^\star}_{2^\star} \qquad \mbox{for every }\  u \in H^1_0(\Omega).\]
From Lemma \ref{semicontinuity}, $\mathcal F$ is sequentially weakly lower semicontinuous, and coercive,  thus, the set $M_\mathcal F$ of its global minima is non empty.

Denote by
\begin{equation}\label{lambdastar}\lambda^\star=\inf\left\{\frac{\mathcal F(u)-\mathcal F(v)}{\|u-v\|_2^2}  \ : \ (v, u)\in M_{\mathcal F}\times H^1_0(\Omega), \  v\neq u \right\}
\end{equation} and fix $\lambda>\lambda^\star$.

Let $f: H^1_0(\Omega)\times C\to\R $ be the function
\[f(u,v)=\mathcal F(u)-\lambda \|u-v\|_2^2.\]

From the Eberlein Smulyan theorem it follows that $f(\cdot, v)$ has weakly compact sublevel sets in $H^1_0(\Omega)$. It is also clear that $f(u, \cdot)$ is continuous and concave in $L^2(\Omega)$. Let us prove \eqref{minimax}.

Recalling that the closure of $C$ in $L^2(\Omega)$ (denoted by ${\overline C}$) contains $H^1_0(\Omega)$, one has
\begin{align}\label{first}
\inf_{u\in H^1_0(\Omega)}\sup_{v\in C } f(u,v)&=\inf_{u\in H^1_0(\Omega)}\sup_{v\in \overline {C}}f(u,v)\nonumber \\&\geq
\inf_{u\in H^1_0(\Omega)}\sup_{v\in H^1_0(\Omega) } f(u,v)\nonumber\\&=
\inf_{u\in H^1_0(\Omega)}\sup_{v\in H^1_0(\Omega) } (\mathcal F(u)-\lambda \|u-v\|_2^2)\nonumber \\\nonumber&=\inf_{u\in H^1_0(\Omega)}(\mathcal F(u)-\lambda \inf_{v\in H^1_0(\Omega)}\|u-v\|_2^2)\\&=
\min_ {H^1_0(\Omega)}\mathcal F
\end{align}

Since  $\lambda>\lambda^\star$, there exist $u_0, v_0\in  H^1_0(\Omega), u_0\neq v_0$ and $\varepsilon>0 $ such that
\begin{align*}
&\mathcal F(u_0)-\lambda \|u_0-v_0\|_2^2<\mathcal F(v_0)-\varepsilon,\\
& \mathcal F(v_0)= \min_ {H^1_0(\Omega)}\mathcal F.
\end{align*}
Thus, if $h:L^2(\Omega)\to\R$ is the function defined by $h(v)=\inf_{u\in H^1_0(\Omega)}(\mathcal F(u)-\lambda \|u-v\|_2^2)$,
then, $h$ is upper semicontinuous in $L^2(\Omega)$ and \[h(v_0)\leq \mathcal F(u_0)-\lambda \|u_0-v_0\|_2^2<\mathcal F(v_0)-\varepsilon.\]
So, there exists $\delta>0$ such that $h(v)<\mathcal F(v_0)-\varepsilon$ for all $\|v-v_0\|_2\leq \delta.$
Therefore,
\[\sup_{\|v-v_0\|_2\leq \delta }\inf_{u\in H^1_0(\Omega)}(\mathcal F(u)-\lambda \|u-v\|_2^2)\leq \mathcal F(v_0)-\varepsilon.\]
On the other hand,
 \[\sup_{\|v-v_0\|_2\geq \delta }\inf_{u\in H^1_0(\Omega)}(\mathcal F(u)-\lambda \|u-v\|_2^2)\leq
\sup_{\|v-v_0\|_2\geq \delta } (\mathcal F(v_0)-\lambda \|v_0-v\|_2^2)\leq \mathcal F(v_0)-\lambda\delta^2.\]
Summing up the above outcomes, we obtain

\begin{align}\label{second}
\sup_{v\in C }\inf_{u\in H^1_0(\Omega)} f(u,v)&\leq \sup_{v\in L^2(\Omega) }\inf_{u\in H^1_0(\Omega)}f(u,v)\nonumber \\&=
\sup_{v\in L^2(\Omega) }\inf_{u\in H^1_0(\Omega)}(\mathcal F(u)-\lambda \|u-v\|_2^2)\nonumber\\&<\mathcal F(v_0)=\min_{H^1_0(\Omega)}\mathcal F.
\end{align}
From \eqref{first} and \eqref{second},  claim \eqref{minimax} follows.
Applying Theorem \ref{minimax theorem}, we deduce the existence of $v^*\in C$ such that  the energy functional
\[\mathcal E(u)=\mathcal F(u)-\frac{\lambda}{2}\|u-v^*\|_2^2\] associated to our problem has two global minima, which is claim $(A)$. In order to prove $(B)$ we observe that, since  the functional is of class $C^1$, such global minima turns out to be weak solutions of our problem. The third solution follows by Lemma \ref{Palais Smale} (recall that $C_2(N)\geq C_1(N)$) and  a classical version of the Mountain Pass theorem by Pucci and Serrin \cite{PS}.\qed

\begin{rem} For sake of clarity, we calculate the approximate
values of the constants $C_1(N)$ and $C_2(N)$ for some  $N$:
	\begin{center}
	\begin{tabular}{|c|c|c|}
	\hline
	$N$ & $C_1(N)$ & $C_{2}(N)$\tabularnewline
	\hline
	\hline
	5 & 0.002495906672 & 0.002685168050\tabularnewline
	\hline
	6 & 0.0001990835458 & 0.0002239689890\tabularnewline
	\hline
	7 & 0.00001712333233 & 0.00001985538802\tabularnewline
	\hline
	9 & 1.269275934$\cdot10^{-7}$ & 1.529437355$\cdot10^{-7}$\tabularnewline
	\hline
\end{tabular}
	
\end{center}

\end{rem}

	{\begin{question}
	 Notice that if $N=4$ then, for $b S_N^2< 1$, $\mathcal E$ is unbounded from below. Indeed, if $\{u_n\}$ is such that $\frac{\|u_n\|^2}{\|u_n\|_4^2}\to S_N$, then we can fix $c$ and $\bar n$ such that $\frac{\|u_{\bar n}\|^2}{\|u_{\bar n}\|_4^2}<c<b^{-\frac{1}{2}}$. Thus
	 \[
	 \mathcal{E}(\tau u_{\bar n})<\frac{a\tau^2}{2}\|u_{\bar n}\|^{2}+\frac{\tau^4}{4}\left(b-\frac{1}{c^2}\right)\|u_{\bar n}\|^{4}-\frac{\lambda}{2}\|\tau u_{\bar n}-v^{*}\|_{2}^{2}\to-\infty, \mbox{as} \ \tau\to+\infty.
	 \] It remains an open question if, when $N>4$, Theorem \ref{our theorem} holds for every $a\geq 0, b>0$ with $ a^{\frac{N-4}{2}} b< C_1(N)$.
	\end{question}
{\bf Acknowledgment} This work was initiated when Cs.
Farkas visited the Department of Mathematics of the University of
Catania, Italy. He thanks the financial support of Gruppo Nazionale per l'Analisi Matematica, la Probabilit\`a e le loro Applicazioni (GNAMPA) of the Istituto Nazionale di Alta Matematica (INdAM).

\end{document}